\def\Frechet{Fr\'{e}chet}
\theoremstyle{plain}
\newtheorem{theorem}{Theorem}
\newtheorem{lemma}{Lemma}
\newtheorem{proposition}{Proposition}
\newtheorem{corollary}{Corollary}
\theoremstyle{remark}
\newtheorem{remark}{Remark}
\theoremstyle{definition}
\newtheorem{definition}{Definition}
\begin{document}

\begin{frontmatter}
\title{General M-estimators of location on Riemannian manifolds: existence and uniqueness}
\runauthor{Lee and Jung}
\runtitle{General M-estimators of location on Riemannian manifolds}

\begin{aug}
\author[A]{\fnms{Jongmin}~\snm{Lee}\ead[label=e1]{jongmin.lee@pusan.ac.kr}\orcid{0000-0003-1723-4615}}
\and
\author[B]{\fnms{Sungkyu}~\snm{Jung}\ead[label=e2]{sungkyu@snu.ac.kr}\orcid{0000-0002-6023-8956}}
\address[A]{Department of Statistics,
Pusan National University\printead[presep={,\ }]{e1}}

\address[B]{Department of Statistics and Institute for Data Innovation in Science,
Seoul National University\printead[presep={,\ }]{e2}}
\end{aug}

\begin{abstract}
We study general M-estimators of location on Riemannian manifolds, extending classical notions such as the Fr\'{e}chet mean by replacing the squared loss with a broad class of loss functions. Under minimal regularity conditions on the loss function and the underlying probability distribution, we establish theoretical guarantees for the existence and uniqueness of the associated population M-functional and the corresponding sample M-estimators. In particular, we provide sufficient conditions under which the population minimizer set is nonempty and reduces to a singleton, and under which the corresponding sample M-estimator is likewise uniquely defined. Our results offer a general framework for robust location estimation in non-Euclidean geometric spaces and unify prior uniqueness results under a broad class of convex losses.
\end{abstract}

\begin{keyword}[class=MSC]
\kwd[Primary ]{62R30}
\kwd{62G35}
\end{keyword}

\begin{keyword}
\kwd{Generalized Fr\'{e}chet mean}
\kwd{Riemannian center of mass}
\kwd{Robust statistics}
\kwd{Statistics on manifolds}
\end{keyword}

\end{frontmatter}
 \section{Introduction}
In statistical data analysis, locating the mean of a dataset plays a fundamental role, with applications in data summarization, hypothesis testing, confidence region construction, and analysis of variance. Emerging developments in non-Euclidean data analysis have led to the extension of the classical notion of the \Frechet\ mean to more general M-estimators, enabling it to be a robust measure of central tendency in geometric spaces (see, e.g., \cite{lee2026huber}).

Replacing the $L_{2}$-loss function used in the \Frechet\ mean by a generic loss function $\rho(\cdot)$, we define the general M-estimators of location and the corresponding population parameter characterized by the loss, which we call {\it population M-functional}. 

\begin{definition}\label{def:1}
Let $(M,d)$ be a metric space, and $\mathbb{P}_X$ be a probability measure on $M$. Define the population risk function associated with a loss function $\rho$ as $F(m)=\int \rho\{d(x,m)\}d\mathbb{P}_{X}(x)$. The population M-functional in its set-valued form is the set of minimizers
$$E:=\arg\min_{m \in M} F(m).$$
For an $n$-tuple of observations $(x_1, x_2, \ldots, x_n) \in M^n$, the corresponding sample M-estimator set is 
\[
E_{n}:=\arg\min_{m\in M} F_{n}(m), ~ F_{n}(m)= \frac{1}{n}\sum_{i=1}^{n} \rho\{d(x_i, m)\}.
\]
\end{definition}
In Definition~\ref{def:1}, the minimizers are represented by sets $E$ and $E_n$ rather than single points, since uniqueness is not guaranteed in general.
Theoretical properties of general M-estimators on abstract metric spaces, such as consistency and asymptotic normality, have been the subject of active research in recent years \citep{huckemann2011inference, ginestet2012weighted, sinova2018m, schotz2022strong, park2026strong}.

However, most existing results implicitly assume that the minimizers exist and are unique. Prototypical examples arise in directional statistics, where data lie on the unit sphere and the goal is to estimate a central or ``mean'' direction \citep{mardia2009directional,ley2017modern}. Similarly, on the space of symmetric positive-definite matrices endowed with the affine-invariant Riemannian metric \citep{pennec2006riemannian}, location estimation problems such as barycenters and change-point detection have been extensively studied \citep{jung2025averaging, wang2025riemannian}. In both settings, while classical \Frechet\ means are widely used, their existence, uniqueness, and robustness depend sensitively on the loss function and the underlying geometry. To address this gap, the present article establishes general conditions for the existence and uniqueness of general M-estimators of location, with a particular focus on Riemannian manifolds.


\section{Existence and uniqueness of M-functionals}\label{sec:thms}
Throughout the article, we assume that $M$ is a connected, geodesically complete, and separable $k$-dimensional smooth Riemannian manifold (equipped with a Riemannian metric). 
The Riemannian metric naturally induces the Riemannian distance $d:M \times M \to [0, \infty)$, enabling $(M, d)$ to be a metric space. The least upper bound of sectional curvature of $M$ is denoted by $\Delta$ and assumed to be finite (i.e., $\Delta < \infty$). For details on the Riemannian geometry, see \cite{chavel2006riemannian}.

For an $M$-valued random variable $X$ defined on a probability space equipped with a probability $\mathbb{P}$, the pushforward measure of $\mathbb{P}$ under $X$ is denoted by $\mathbb{P}_{X}$. 
For a point $p \in M$ and $r>0$, we denote $B_{r}(p):=\{x \in M: d(x, p) < r \}$  for an open ball in $M$. For an $n$-tuple of observations $(x_1, x_2, \ldots, x_n) \in M^n$, write the empirical measure $\frac{1}{n}\sum_{i=1}^{n}\delta_{x_i}$ by $\mathbb{P}_{n}$, where $\delta_{x}$ is the Dirac probability measure at $x$. 



We establish conditions that guarantee  existence and uniqueness of the population M-functionals, from which the corresponding results for the sample M-estimator sets follow as corollaries.

A generic loss function $\rho$ is assumed to satisfy the following condition:

\begin{itemize}
    \item[(C1)] $\rho:[0, \infty) \to [0, \infty)$ is non-decreasing and continuous, satisfies $\rho(0)=0$, and is not identically zero.
\end{itemize}

Notably, most standard 
loss functions meet Condition (C1). Examples include the Huber loss $\rho(t) = t^21_{t \le c} + c(2t-c)1_{t>c}$, Tukey's biweight loss $\rho(t)=\frac{c^2}{6}\{1- (1-(\frac{t}{c})^2)^31_{t\le c}\}$, 
Welsch's loss $\rho(t)= 1-\exp(-(\frac{t}{c})^2)$, Cauchy's loss $\rho(t)=\log\{1+(t/c)^2\}$, Andrew's sine loss 
$\rho(t)=c^2(1-\cos(\frac{t}{c}))1_{y \le \pi c} + 2c^21_{t > \pi c}$ and logarithmic hyperbolic cosine loss $\rho(t)=\log\{\cosh(t)\}$. See Table C.5 of \cite{de2021review} for a list of loss functions.

To control the population objective $F$, we impose the following integrability condition for $\mathbb{P}_X$ with respect to $\rho$. 

\begin{itemize}
\item[(A1)] For any $m \in M$, $F(m)=\int \rho\{d(x, m)\}d\mathbb{P}_{X}(x) < \infty$. 
\end{itemize}
To ensure the existence of M-functionals, we first verify a {\it coercivity} property of the objective $F$, meaning that its minimum is attained in a compact set.

\begin{proposition}
Suppose that the loss function $\rho$ satisfies Condition (C1), and that $\mathbb{P}_X$ satisfies Assumption (A1). Then, there exists a compact set $K \subseteq M$ such that 
\begin{eqnarray*}
& \inf_{m \in K} F(m)=\inf_{m \in M} F(m),~ \mbox{and} \\
& \mbox{if} ~ K \subsetneq M, ~ \inf_{m\in K} F(m) < \inf_{m \in M \setminus K} F(m).
\end{eqnarray*}
Accordingly, $E \subset K$.
\label{prop:coercive}
\end{proposition}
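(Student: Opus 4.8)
The plan is to take $K$ to be a large closed metric ball around a well-chosen base point and to show that $F$ exceeds its infimum everywhere outside such a ball; when $M$ is itself compact the construction just returns $K=M$, in which case both assertions are trivial or vacuous. Fix a base point $o$ in the support of $\mathbb{P}_X$ (nonempty since $M$ is separable) and put $v:=\inf_{m\in M}F(m)$; since $\rho\ge 0$ and $F(o)<\infty$ by Assumption (A1), $v\in[0,\infty)$. The one delicate point is the strict inequality $v<L$, where $L:=\sup_{t\ge 0}\rho(t)=\lim_{t\to\infty}\rho(t)\in(0,\infty]$ (the limit exists because $\rho$ is non-decreasing, and is positive because $\rho\not\equiv 0$). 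If $L=\infty$ this is clear; if $L<\infty$, continuity of $\rho$ at $0$ with $\rho(0)=0<L$ lets us pick $\delta>0$ with $\rho(\delta)<L$, and since $o$ lies in the support, $\mathbb{P}_X(B_\delta(o))>0$, so splitting the integral defining $F(o)$ over $B_\delta(o)$ and its complement (bounding $\rho$ by $\rho(\delta)$ and by $L$, respectively) gives $F(o)\le L-\mathbb{P}_X(B_\delta(o))(L-\rho(\delta))<L$, hence $v\le F(o)<L$.

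Next I make the coercivity quantitative. Choose $T_0\ge 0$ with $a:=\rho(T_0)>v$ (possible because $\sup_t\rho(t)=L>v$), then $\delta\in(0,1)$ with $a(1-\delta)>v$, and then $s_0>0$ with $\mathbb{P}_X(B_{s_0}(o))\ge 1-\delta$; the last step is legitimate because the balls $B_s(o)$ increase to $M$ as $s\to\infty$ --- every point of the connected manifold $M$ is at finite distance from $o$ --- so $\mathbb{P}_X(B_s(o))\to 1$. Put $R:=s_0+T_0$ and $K:=\{m\in M:d(o,m)\le R\}$, which is closed and bounded, hence compact by the Hopf--Rinow theorem. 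For any $m\notin K$ and any $x\in B_{s_0}(o)$, the triangle inequality gives $d(x,m)\ge d(o,m)-d(o,x)>R-s_0=T_0$, so $\rho\{d(x,m)\}\ge\rho(T_0)=a$ by monotonicity, and integrating over $B_{s_0}(o)$ alone yields $F(m)\ge a\,\mathbb{P}_X(B_{s_0}(o))\ge a(1-\delta)>v$. Therefore $\inf_{m\in M\setminus K}F(m)\ge a(1-\delta)>v$.

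It remains to read off the statement. If $K=M$ there is nothing further to prove. If $K\subsetneq M$, then from $\inf_{m\in M}F(m)=\min\{\inf_{m\in K}F(m),\inf_{m\in M\setminus K}F(m)\}$ together with the strict inequality just obtained we get $\inf_{m\in K}F(m)=v=\inf_{m\in M}F(m)$ (the first claim), while $\inf_{m\in K}F(m)=v<a(1-\delta)\le\inf_{m\in M\setminus K}F(m)$ is the second. Finally $E\subset K$: any $m\in E$ has $F(m)=v<\inf_{m'\in M\setminus K}F(m')$, so $m\notin M\setminus K$. I expect the only real obstacle to be the inequality $v<L$ --- that the infimum of $F$ does not coincide with the supremal value of a bounded loss $\rho$ --- which is precisely where membership of $o$ in the support of $\mathbb{P}_X$ and the integrability assumption (A1) are used; everything else is routine use of monotone convergence, the triangle inequality, and Hopf--Rinow.
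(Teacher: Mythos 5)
Your proof is correct. The core coercivity mechanism is the same as the paper's --- integrate $\rho\{d(x,m)\}$ over a ball of large $\mathbb{P}_X$-mass around a base point, use the triangle inequality and monotonicity of $\rho$ to bound $F$ from below outside a larger ball, and invoke Hopf--Rinow for compactness --- but you reach it by a noticeably cleaner route. The paper splits into three cases ($M$ bounded; $U=\infty$; $U<\infty$) and, for the bounded-loss case, first proves that $F$ is non-constant via an escaping geodesic ray, Fatou's lemma, and a countable dense cover of $M$. You bypass all of that by fixing a point $o\in\textsf{supp}(\mathbb{P}_X)$ (nonempty by separability) and showing directly that $F(o)\le L-\mathbb{P}_X(B_\delta(o))\,(L-\rho(\delta))<L$, which is exactly the strict inequality $\inf_M F<\sup\rho$ needed to pick a threshold $T_0$ with $\rho(T_0)>\inf_M F$; the single quantitative construction $K=\overline{B_{s_0+T_0}(o)}$ then covers both the bounded and unbounded loss cases (and returns $K=M$ harmlessly when $M$ is bounded). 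Each step checks out: the support point guarantees $\mathbb{P}_X(B_\delta(o))>0$, connectedness plus continuity from below gives $\mathbb{P}_X(B_s(o))\to 1$, and the final bookkeeping with $\inf_M F=\min\{\inf_K F,\inf_{M\setminus K}F\}$ delivers both displayed claims and $E\subset K$. What your version buys is the elimination of the Fatou/escaping-geodesic machinery and the case analysis; what the paper's version makes slightly more explicit is the geometric picture of why a constant $F$ would have to equal $U$, but that detour is not logically needed for the proposition.
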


The proof of Proposition \ref{prop:coercive} is provided in Appendix. Using the coercivity of $F$, we obtain the following existence result.

\begin{theorem}[Existence of population M-functionals]\label{thm:exist}
Suppose that Condition (C1) holds and that $\mathbb{P}_{X}$ satisfies Assumption (A1). Then, the population M-functionals associated with $\mathbb{P}_{X}$ exists; that is, $E\neq \emptyset$. 
\end{theorem}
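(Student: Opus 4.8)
The plan is to deduce Theorem \ref{thm:exist} directly from Proposition \ref{prop:coercive} together with the continuity of the objective function $F$ and a standard compactness argument. By Proposition \ref{prop:coercive}, there is a compact set $K \subseteq M$ with $\inf_{m \in K} F(m) = \inf_{m \in M} F(m)$, so it suffices to show that the infimum of $F$ over the compact set $K$ is attained. For this, the key intermediate step is to establish that $F: M \to [0,\infty)$ is (lower semi-)continuous. Once $F$ is continuous on $K$, the classical extreme value theorem (a continuous real-valued function on a nonempty compact set attains its minimum) yields a point $m^* \in K$ with $F(m^*) = \inf_{m \in K} F(m) = \inf_{m \in M} F(m)$, hence $m^* \in E$ and $E \neq \phi$.

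First I would verify that $K$ is nonempty: this is immediate since $M$ itself is nonempty (it is a connected manifold) and $K = M$ in the bounded case, while in the unbounded case $K$ is a closed ball of positive radius, hence nonempty. Next I would prove continuity of $F$. Fix $m_0 \in M$ and a sequence $m_j \to m_0$. For each $x$, the map $m \mapsto \rho\{d(x,m)\}$ is continuous (composition of the continuous distance function with the continuous $\rho$ from (C1)), so $\rho\{d(x,m_j)\} \to \rho\{d(x,m_0)\}$ pointwise in $x$. To pass the limit through the integral I would produce an integrable dominating function: by the triangle inequality $d(x,m_j) \le d(x,m_0) + d(m_0,m_j) \le d(x,m_0) + R$ for some fixed $R$ bounding $d(m_0, m_j)$ along the convergent sequence, so monotonicity of $\rho$ gives $\rho\{d(x,m_j)\} \le \rho\{d(x,m_0)+R\}$. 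It remains to check $\int \rho\{d(x,m_0)+R\}\,d\mathbb{P}_X(x) < \infty$; when $U = \sup_t \rho(t) < \infty$ this is trivial since the integrand is bounded by $U$, and when $U = \infty$ one can instead dominate by $\rho\{d(x,p)+R'\}$ for a suitable fixed $p$ and invoke (A1) after a further triangle-inequality reduction, or more simply note that $\rho$ nondecreasing continuous with (A1) holding at every point already forces such bounds. Then the dominated convergence theorem gives $F(m_j) \to F(m_0)$, establishing continuity.

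With $F$ continuous and $K$ compact and nonempty, the infimum $\inf_{m\in K} F(m)$ is attained at some $m^* \in K$, and combining with Proposition \ref{prop:coercive} we conclude $F(m^*) = \inf_{m\in M} F(m)$, i.e. $m^* \in E$, so $E \neq \phi$.

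The main obstacle I anticipate is the construction of the integrable dominating function needed to justify continuity of $F$ in the case $U = \infty$: unlike the bounded case, one cannot simply bound the integrand by a constant, and one must carefully exploit Assumption (A1) — which guarantees finiteness of $F$ at \emph{every} point — to control $\int \rho\{d(x,m_0)+R\}\,d\mathbb{P}_X(x)$ uniformly over a convergent sequence. One clean way around this is to prove lower semicontinuity of $F$ via Fatou's lemma (which needs no domination) and separately argue that this suffices: a lower semicontinuous function on a nonempty compact set still attains its minimum, so in fact full continuity is not strictly required and the domination issue can be sidestepped entirely.
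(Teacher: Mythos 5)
Your proposal is essentially correct, and the route you settle on in your final paragraph --- lower semicontinuity of $F$ via Fatou's lemma, combined with the compact set $K$ from Proposition \ref{prop:coercive} and the fact that a lower semicontinuous function attains its minimum on a nonempty compact set --- is exactly the paper's proof. I would advise you to drop the primary route via dominated convergence rather than present it as the main argument: the domination step is not merely ``the main obstacle'' but a genuine gap. Assumption (A1) gives $\int \rho\{d(x,m)\}\,d\mathbb{P}_X(x) < \infty$ pointwise in $m$, and your claim that this ``already forces'' $\int \rho\{d(x,m_0)+R\}\,d\mathbb{P}_X(x) < \infty$ is unsubstantiated: since $\rho$ is only non-decreasing (no growth control such as $\rho(s+t)\le C\{\rho(s)+\rho(t)\}$ is assumed), one would need, for each $x$ in the support, a point $m'$ from some fixed finite set with $d(x,m') \ge d(x,m_0)+R$ in order to reduce to (A1); on a general complete manifold such points need not exist, and constructing them even in $\mathbb{R}^k$ requires a separate geometric argument. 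Since full continuity of $F$ is never needed --- lower semicontinuity suffices and costs nothing beyond Fatou's lemma applied to the nonnegative integrands $\rho\{d(X,m_n)\}$ together with continuity of $\rho$ and of $d(x,\cdot)$ --- the clean proof is the one you describe last, and it is the one the paper gives.
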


Our proof of Theorem \ref{thm:exist} is given in Appendix.

By replacing $\mathbb{P}_{X}$ with the empirical measure $\mathbb{P}_n$ in Theorem \ref{thm:exist}, the existence of the sample M-estimators is established immediately.

\begin{corollary}[Existence of the sample M-estimators]
For any $n$-tuple of observations $(x_{1}, x_2, \ldots, x_n)\in M^n$, the sample M-estimator set  $E_{n}(x_1, x_2, \ldots x_n)$ is non-empty.
\end{corollary}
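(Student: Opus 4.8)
The plan is to obtain the corollary as an immediate specialization of Theorem \ref{thm:exist}. The key observation is that the empirical measure $\mathbb{P}_n = \frac{1}{n}\sum_{i=1}^{n}\delta_{x_i}$ is itself the pushforward measure $\mathbb{P}_{X}$ of some $M$-valued random variable $X$ (for instance, $X$ uniformly distributed on the index set $\{1,\dots,n\}$ and mapped to the corresponding $x_i$), so that $F_n(m) = \frac{1}{n}\sum_{i=1}^n \rho\{d(x_i,m)\}$ plays exactly the role of $F(m)$ in the population statement, and $E_n$ plays the role of $E$. Thus it suffices to check that the two hypotheses of Theorem \ref{thm:exist}, namely Condition (C1) on $\rho$ and Assumption (A1) on the relevant distribution, are in force.

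First I would note that Condition (C1) is a standing assumption on the loss $\rho$ and is unchanged when we pass from $\mathbb{P}_X$ to $\mathbb{P}_n$, so nothing needs to be verified there. Second, I would verify Assumption (A1) for $\mathbb{P}_n$: for any $m \in M$,
\[
F_n(m) = \frac{1}{n}\sum_{i=1}^n \rho\{d(x_i,m)\} < \infty,
\]
since it is a finite sum of finite real numbers --- each $d(x_i,m)$ is a finite nonnegative number and $\rho$ maps $[0,\infty)$ into $[0,\infty)$ with finite values. Hence (A1) holds automatically for the empirical measure, with no moment or integrability restriction on the data needed.

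With both hypotheses confirmed, Theorem \ref{thm:exist} applied with $\mathbb{P}_X$ replaced by $\mathbb{P}_n$ yields that the minimizer set $E_n = \operatorname{argmin}_{m\in M} F_n(m)$ is non-empty, which is precisely the claim. I do not anticipate any genuine obstacle here; the only point requiring a sentence of care is the explicit remark that $\mathbb{P}_n$ is a legitimate distribution of an $M$-valued random variable so that the population-level result literally applies, and that (A1) is trivially satisfied because a finite sum cannot diverge. Everything else is a direct quotation of the already-established existence theorem.
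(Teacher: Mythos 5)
Your proposal is correct and takes exactly the route the paper intends: the corollary is obtained by applying Theorem \ref{thm:exist} with $\mathbb{P}_X$ replaced by the empirical measure $\mathbb{P}_n$, for which (A1) holds trivially since $F_n(m)$ is a finite sum of finite values. The paper leaves this verification implicit; your write-up simply makes it explicit.
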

 
We now turn to conditions ensuring uniqueness, which will rely on one of the following convexity assumptions imposed on the loss function $\rho$. 

\begin{itemize}
\item[(C2)] $\rho'(\cdot)$ is positive and non-decreasing on $(0, \infty)$. Moreover, $\rho''(\cdot)$ is well-defined and is non-negative on $[0, \infty)\setminus S$, where $S$ is a countable set.
\item[(C3)] The right derivative at zero satisfies $\rho'(0):=\lim_{t \to +0}\frac{\rho(t) - \rho(0)}{t}=0$. In addition, $\rho(\cdot)$ is twice continuously differentiable on $[0, \infty)$, and $\rho''(\cdot)$ is positive and non-decreasing on $(0, \infty)$.
\end{itemize}



Condition (C2) is more relaxed than (C3), as the latter implies the former. 
Many standard loss functions, such as the absolute loss and (pseudo-) Huber losses, are convex and satisfy (C2) but not (C3). In contrast, 
strictly convex loss functions such as the $L_{p}$-loss $\rho(t)=t^{p}$ for any $p \in [2, \infty)$ and softplus-square loss $\rho(t)= \{\log(1+e^{t})\}^2$  satisfy (C3). 


We now impose a condition on the support $\textsf{supp}(\mathbb{P}_{X})$ of the underlying distribution, which will interact with the curvature of the manifold to guarantee convexity of the objctive function. Let $r_{\mbox{\tiny inj}}(M)$ denote the injectivity radius of $M$, and recall that $\Delta$ stands for the least upper bound of the sectional curvatures of $M$. By convention, we regard ${1}/{\sqrt{\Delta}}=\infty$ when $\Delta \le 0$.

\begin{itemize}
    \item[(A2)]  There exists $p_{0} \in M$ such that $\textsf{supp}(\mathbb{P}_{X}) \subset B_{r_0}(p_0)$, and 
\begin{equation}\label{eq:r_0}
    r_0 = \begin{cases} \frac{1}{2}\min\{ \frac{\pi}{2\sqrt{\Delta}}, r_{\mbox{\tiny inj}}(M) \} \quad \mbox{if $\rho$ satisfies (C2)}, \\
    \frac{1}{2}\min\{ \frac{\pi}{\sqrt{\Delta}}, r_{\mbox{\tiny inj}}(M) \}
\quad \mbox{if $\rho$ satisfies (C3)}.    
    \end{cases}
\end{equation} 
\end{itemize}

When $M$ is Hadamard, $r_0$ specified in (\ref{eq:r_0}) is infinite; thus, in this case, Assumption (A2) is not a restriction on $\mathbb{P}_{X}$ at all.

\begin{theorem}[Uniqueness of population M-functional]\label{thm:unique:pop}

Suppose Condition (C1) holds and $\mathbb{P}_{X}$ satisfies Assumption (A1). The population M-functional associated with $\mathbb{P}_X$ is unique if either of the following conditions holds:
\begin{enumerate}
\item[(a)] $\rho$ satisfies Condition (C2), $\mathbb{P}_{X}$ satisfies Assumption (A2), and $\mathbb{P}_X$ is not supported entirely on any single geodesic;

\item[(b)] $\rho$ satisfies Condition (C3), and $\mathbb{P}_{X}$ satisfies Assumptions (A2).
\end{enumerate}
\end{theorem}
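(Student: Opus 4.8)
\emph{Overview.} The plan is to combine the existence result (Theorem~\ref{thm:exist}) with a convexity/critical-point analysis of $F$ on the strongly convex ball supplied by Lemma~\ref{lem:contain:ball}. By Theorem~\ref{thm:exist}, $E$ is non-empty; by Lemma~\ref{lem:contain:ball} and the discussion following it, $E\subseteq B_{r_0}(p_0)$ with $B_{r_0}(p_0)$ strongly convex, and $\textsf{supp}(\mathbb{P}_X)\subseteq B_{r_0}(p_0)$ by (A2). So it suffices to rule out two distinct minimizers $m_0,m_1\in E$. Since $m_0\ne m_1$, they are joined by a unique, non-constant, minimizing geodesic $\gamma\colon[0,1]\to B_{r_0}(p_0)$ (which stays in $B_{r_0}(p_0)$ by strong convexity), and the goal is to contradict $F(\gamma(0))=F(\gamma(1))=\inf_{m\in M}F(m)$. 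For each fixed $x\in B_{r_0}(p_0)$, Toponogov-type comparison on the strongly convex ball (see \citep[][Theorem~2.2]{afsari2011riemannian} and \citep[][pp.~420]{chavel2006riemannian}) controls the second variation of $m\mapsto d(x,m)$ and of $m\mapsto d(x,m)^2$ along geodesics in $B_{r_0}(p_0)$; this is the engine behind both cases.

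\emph{Case (a), under Condition (C2).} Here the radius in (A2) is small enough that $t\mapsto d(x,\gamma(t))$ is convex on $[0,1]$ for every $x\in\textsf{supp}(\mathbb{P}_X)$, and since (C2) forces $\rho$ to be non-decreasing and convex, $t\mapsto\rho\{d(x,\gamma(t))\}$ is convex; integrating, $t\mapsto F(\gamma(t))$ is convex. Being convex and attaining $\inf_{m\in M}F(m)$ at both endpoints, $F\circ\gamma$ is constant on $[0,1]$, so comparing at dyadic midpoints and using convexity of each integrand forces $t\mapsto\rho\{d(x,\gamma(t))\}$ to be affine on $[0,1]$ for $\mathbb{P}_X$-almost every $x$. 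As (C2) makes $\rho$ strictly increasing and convex (so that $\rho^{-1}$ is concave), $t\mapsto d(x,\gamma(t))$ is then both concave and convex, hence affine, for $\mathbb{P}_X$-almost every $x$; but in a strongly convex ball $t\mapsto d(x,\gamma(t))$ is affine only when $x$ lies on the complete geodesic carrying $\gamma$. Hence $\textsf{supp}(\mathbb{P}_X)$ lies on a single geodesic, contradicting the hypothesis in (a). (This hypothesis is genuinely needed: for the absolute loss the set of medians of two distinct points is the whole geodesic segment between them.)

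\emph{Case (b), under Condition (C3).} Now $\rho$ is strictly convex with $\rho'(0)=0$ and (A2) allows the larger radius, so $m\mapsto d(x,m)$ may fail to be convex across all of $B_{r_0}(p_0)$ and $F$ need not be geodesically convex there; I would therefore follow \citep[][Theorem~2.1]{afsari2011riemannian} and argue via critical points. Writing $d_x=d(x,\cdot)$, the identity $\rho'(0)=0$ keeps $\rho\{d_x\}$ of class $C^1$ even at $m=x$, so no singular contribution arises; and Lemma~\ref{lem:ineq} ($\rho'(t)\le t\rho''(t)$), together with the Hessian comparison lower bound $\textsf{Hess}\,d_x\ge\sqrt{\Delta}\cot(\sqrt{\Delta}\,d_x)$ in directions tangent to the geodesic sphere about $x$ (the bound being negative once $d_x>\pi/(2\sqrt{\Delta})$), lets the radial term $\rho''(d_x)\langle\nabla d_x,\cdot\rangle^2$ of $\textsf{Hess}\,\rho\{d_x\}$ absorb the tangential term $\rho'(d_x)\,\textsf{Hess}\,d_x$. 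Using this I would show that $\textsf{Hess}\,F(m)$ is positive definite at every critical point $m$ of $F$ inside $B_{r_0}(p_0)$, so each such point is a non-degenerate local minimum; the third step in the proof of Lemma~\ref{lem:contain:ball} shows $-\textsf{grad}\,F$ is non-zero and inward-pointing on $\partial B_{r_0}(p_0)$, and since $E\subseteq B_{r_0}(p_0)$, a Poincar\'{e}--Hopf count on $\overline{B_{r_0}(p_0)}$ then forces $E$ to consist of a single point. (Equivalently, one shows $t\mapsto F(\gamma(t))$ has strictly positive second derivative along $\gamma$ by the same cancellation, contradicting the equality of its endpoint values with $\inf_{m\in M}F(m)$.)

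\emph{Main obstacle.} The crux is this last estimate in Case (b): establishing $\textsf{Hess}\,F\succ0$ at critical points (equivalently, strict second-order growth of $F$ along $\gamma$) in the regime where $d(x,m)$ may be as large as $\pi/\sqrt{\Delta}$, a regime in which the summands $m\mapsto\rho\{d(x,m)\}$ are themselves non-convex, so the positivity must emerge from the aggregate together with the exact cancellation furnished by Lemma~\ref{lem:ineq}; making this balance close, and correctly handling the cut-locus and boundary bookkeeping, is the delicate part. A secondary technical issue is that under (C2) the second derivative $\rho''$ exists only off a countable set, so every convexity claim about $t\mapsto\rho\{d(x,\gamma(t))\}$ should be phrased via difference quotients and monotonicity of one-sided derivatives rather than pointwise Hessians, and the corner of $t\mapsto d(x,\gamma(t))$ at the parameter where $\gamma(t)=x$ must be tracked separately.
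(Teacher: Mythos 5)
Your Case (a) is correct but runs along a genuinely different track from the paper. The paper differentiates under the integral, shows $\tfrac{\partial^2}{\partial t^2}F(\gamma(t))>0$ off a countable exceptional set (point masses plus the set $S$ where $\rho''$ may fail to exist), and invokes the Goldowsky--Tonelli theorem to upgrade this to strict convexity of $F\circ\gamma$. You instead stay at the level of plain convexity: the (C2)-radius makes every $t\mapsto d(x,\gamma(t))$ convex, hence $F\circ\gamma$ is convex and therefore constant between two minimizers; the equality case of midpoint convexity forces each integrand $t\mapsto\rho\{d(x,\gamma(t))\}$ to be affine $\mathbb{P}_X$-a.e.; strict monotonicity of $\rho$ (from $\rho'>0$ in (C2)) plus the strict Hessian comparison for $x\notin\gamma$ then push the support onto a single geodesic, contradicting the hypothesis. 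This is sound and arguably cleaner: it avoids differentiating under the integral and all of the countable-exceptional-set bookkeeping, at no real cost since (C2) already supplies the strict monotonicity you need to invert $\rho$.

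Case (b) has a genuine gap. You correctly identify the paper's skeleton for $\Delta>0$ (every critical point of $F$ in $B_r(p_0)$ is a nondegenerate local minimum, then Poincar\'{e}--Hopf on $\overline{B_r(p_0)}$ using the inward-pointing gradient on the boundary), but the decisive estimate --- positive definiteness of $\textsf{Hess}\,F$ at a critical point $m_0$ --- is exactly what you leave open, and the mechanism you gesture at (the radial term $\rho''\langle\nabla d_x,\cdot\rangle^2$ ``absorbing'' the tangential term via Lemma~\ref{lem:ineq}) cannot close it pointwise: for $x$ with $d(x,m_0)>\pi/(2\sqrt{\Delta})$ and $\alpha_x$ near $\pi/2$, the radial term vanishes while $\cot\{\sqrt{\Delta}d(x,m_0)\}<0$ makes the tangential term strictly negative, so individual summands of the Hessian are genuinely indefinite. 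The paper's resolution has two ingredients you omit. First, using $s\cot s\le 1$ together with $\rho'(t)\le t\rho''(t)$, the integrand is lower-bounded by $\rho'\{d(x,m_0)\}\sqrt{\Delta}\cot\{\sqrt{\Delta}d(x,m_0)\}$, eliminating the angle $\alpha_x$ entirely (this is display (\ref{ineq:2nd_deriv2})). Second --- and this is the missing idea --- the resulting integral is shown positive \emph{in aggregate} by combining the Toponogov-derived bound $\cot\{\sqrt{\Delta}d(x,m_0)\}>C(m_0,\Delta)\cos(\beta_x)$, where $\beta_x$ is the angle at $m_0$ toward the support center $p_0$, with the first-order criticality condition $\textsf{grad}F(m_0)=\mathbf{0}$ projected onto the geodesic from $m_0$ to $p_0$, i.e.\ $\int\rho'\{d(x,m_0)\}\cos(\beta_x)\,d\mathbb{P}_X(x)=0$ (display (\ref{eq:zero})). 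Your proposal never invokes the vanishing of the gradient, which is what makes the cancellation ``exact.'' Two smaller omissions: for $\Delta\le 0$ the paper does not use Poincar\'{e}--Hopf at all but the direct convexity argument (your parenthetical alternative, which does work there); and the sub-case where $\textsf{supp}(\mathbb{P}_X)$ lies on a single geodesic --- permitted under (b) --- must be treated separately, since there the Hessian at a critical point is only positive semi-definite in directions transverse to that geodesic and the paper falls back on strict convexity of $F\circ\gamma$ coming from the $\rho''$-term alone.
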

Our proof of Theorem~\ref{thm:unique:pop}, as well as some auxiliary lemmas, are given in Appendix. 
%
%
By replacing $\mathbb{P}_{X}$ with $\mathbb{P}_{n}$, the uniqueness of the sample M-estimator follows.
\begin{corollary}[Uniqueness of sample M-estimators]\label{cor:unique:sam}
Suppose Condition (C1) holds and that either of the following holds: 
\begin{enumerate}
\item[(a)] $\rho$ satisfies (C2), and the sample points $\{x_{1}, x_{2}, ..., x_{n}\}$ do not lie in the same geodesic, and reside in an open ball of radius $r_0 = \frac{1}{2}\min\{\frac{\pi}{2\sqrt{\Delta}}, r_{\mbox{\tiny inj}}(M) \}$. 

\item[(b)] $\rho$ satisfies (C3), and the sample points $\{x_{1}, x_{2}, ..., x_{n}\}$ lie in an open ball of radius $r_0 = \frac{1}{2}\min\{\frac{\pi}{\sqrt{\Delta}}, r_{\mbox{\tiny inj}}(M) \}$. 
\end{enumerate}
Then, the sample M-estimator is unique. 
\end{corollary}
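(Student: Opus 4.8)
The plan is to derive this corollary from Theorem~\ref{thm:unique:pop} by specializing the underlying distribution to the empirical measure. Observe that the sample objective $F_n$ is exactly the population objective $F$ computed with $\mathbb{P}_X$ replaced by $\mathbb{P}_n = \frac1n\sum_{i=1}^n \delta_{x_i}$, so $E_n = E$ for this choice of distribution. It therefore suffices to check that $\mathbb{P}_n$ meets the hypotheses of Theorem~\ref{thm:unique:pop} under each of (a) and (b), and then to read off uniqueness from the theorem. Existence of a minimizer is already guaranteed by the corollary following Theorem~\ref{thm:exist}, so the conclusion ``unique'' amounts to ``$E_n$ is a singleton.''

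First I would verify Assumption (A1): for every $m \in M$ one has $F_n(m) = \frac1n\sum_{i=1}^n \rho\{d(x_i,m)\}$, a finite sum of the finite real numbers $\rho\{d(x_i,m)\}$ (each finite since $\rho$ is real-valued and continuous on $[0,\infty)$ by (C1)), hence $F_n(m) < \infty$. Next I would verify Assumption (A2): by hypothesis the points $x_1,\dots,x_n$ lie in an open ball $B_{r_0}(p_0)$, and since $\textsf{supp}(\mathbb{P}_n) = \{x_1,\dots,x_n\}$, this is precisely the inclusion $\textsf{supp}(\mathbb{P}_n) \subseteq B_{r_0}(p_0)$ demanded by (A2), provided the radius in the corollary matches the relevant branch of \eqref{eq:r_0}. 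In case (a) the stated radius $\frac12\min\{\frac{\pi}{2\sqrt{\Delta}}, r_{\mbox{\tiny inj}}(M)\}$ is exactly the (C2) branch, and in case (b) the stated radius $\frac12\min\{\frac{\pi}{\sqrt{\Delta}}, r_{\mbox{\tiny inj}}(M)\}$ is exactly the (C3) branch, so this bookkeeping is automatic.

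It remains to handle the extra hypothesis in case (a). Theorem~\ref{thm:unique:pop}(a) additionally requires that the distribution not be supported entirely on any single geodesic; since $\textsf{supp}(\mathbb{P}_n) = \{x_1,\dots,x_n\}$, the corollary's condition ``$x_1,\dots,x_n$ do not lie in the same geodesic'' is literally the statement that $\mathbb{P}_n$ is not supported on a single geodesic. With (C1), (C2), (A1), (A2), and this geodesic condition in hand, Theorem~\ref{thm:unique:pop}(a) gives that $E_n$ is a singleton. In case (b), (C1), (C3), (A1), (A2) are all that is needed, and Theorem~\ref{thm:unique:pop}(b) applies directly to yield the same conclusion.

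\textbf{Anticipated difficulty.} There is essentially no analytic obstacle here: the only care required is the translation between ``support of the empirical measure'' and ``the finite set of sample points,'' and the matching of the two radii appearing in the corollary with the two branches of \eqref{eq:r_0}. Both are routine, so the proof is short.
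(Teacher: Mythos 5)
Your proposal is correct and follows exactly the paper's approach: the paper proves the corollary simply by substituting the empirical measure $\mathbb{P}_n$ for $\mathbb{P}_X$ in Theorem~\ref{thm:unique:pop}, and your verification of (A1), (A2), and the geodesic condition just makes that substitution explicit.
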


\begin{remark}
    \begin{enumerate}
    \item[(a)] The conclusion in Theorem~\ref{thm:unique:pop}(a) extends Theorem~2(a)(i) of \cite{lee2026huber}, while Theorem~\ref{thm:unique:pop}(b) generalizes Theorem~2.1 of \cite{afsari2011riemannian} for the $L_{p}$-loss function $\rho(t)=t^p$ with $2 \le p < \infty$.
    
    \item[(b)] In Theorem \ref{thm:unique:pop} and Corollary \ref{cor:unique:sam}, the imposed convexity assumption on $\rho$ plays a crucial role in guaranteeing uniqueness. In contrast, establishing uniqueness becomes substantially more challenging when $\rho$ is non-convex. Indeed, it is well-known that M-estimators with non-convex (e.g., redescending) losses may fail to be unique even in Euclidean spaces; see, for example, \citep[][Chapter 4.8]{huber2009robust}.
\end{enumerate}
\end{remark}

\section{Conclusion}

We have established the existence and uniqueness of generic M-estimators on Riemannian manifolds under mild regularity conditions on the loss function $\rho$ and suitable assumptions on the underlying probability measure $\mathbb{P}_{X}$. These results provide a foundational basis for the development of methodologies and theories for manifold-valued M-estimation. For instance, our framework offers a theoretical starting point for comparing different M-estimators in terms of efficiency and robustness. More broadly, it can be used to justify robust dimension reduction techniques for manifold-valued data (e.g., \citealp{jung2012analysis}) and to support the construction of robust regression framework replacing the \Frechet-type functionals used in \cite{petersen2019frechet}.


\section*{Acknowledgements}
Jongmin Lee was supported by the Research Fund Program of Institute for Future Earth, Pusan National University, Korea, 2023, Project No. IFE-PNU-2023-001. Sungkyu Jung was supported by the National Research Foundation of Korea (NRF) grants funded by the Korea government (MSIT) (RS-2023-00301976 and RS-2024-00333399).




\begin{appendix}
\section{Proof of Proposition 1}
\begin{proof}[Proof of Proposition 1]
The proof is divided into two cases depending on whether $M$ is bounded or not.

\indent {\bf Case I ($M$ is bounded).} Put $K=M$. Note that $M$ is connected and complete. Since $K$ is closed and bounded, an application of the Hopf-Rinow theorem implies that $K$ is compact and $\inf_{m \in K} F(m)=\inf_{m \in M} F(m)$.

\indent {\bf Case II ($M$ is unbounded).} We shall show that $\{F(m): m \in M\}$ is not singleton. For this, suppose for contradiction there exists $C \in [0, \infty)$ such that $F(m)=C$ for any $m \in M$. Let $\sup_{t \in [0, \infty)}\rho(t) = \lim_{t \to \infty} \rho(t) =: U \in (0, \infty]$ and pick a point $p \in M$. Since $M$ is unbounded, there exists an escaping geodesic ray $\gamma:[0, \infty) \to M$ starting at $p=\gamma(0)$ (see, e.g., \citep[][Exercise 7.6]{do1992riemannian}). The term \textit{escaping geodesic} means that $d(p, \gamma(t)) \underset{t\to\infty}{\to} \infty$; that is, the escaping geodesic extends infinitely in one direction and does not oscillate wildly. 
An application of Fatou's lemma leads that
\begin{equation}
    \label{eq:coer1}
\mathbb{E} [\liminf_{t\to \infty} \rho\{d(X, \gamma(t))\}] \le \liminf_{t\to\infty}\underbrace{\mathbb{E}[\rho\{d(X,\gamma(t))\}]}_{=C} \le U.
\end{equation} 
Furthermore, by the triangle inequality, we have for any $x \in M$, 
\begin{equation}
    \label{eq:coer2}
U \ge \liminf_{t \to \infty} \rho\{d(x, \gamma(t))\} \ge \liminf_{t \to \infty} \rho\{d(p, \gamma(t))- d(x, p)\} = U.
\end{equation} 

If $U = \infty$, then $\mathbb{E} [\liminf_{t\to \infty} \rho\{d(X, \gamma(t))\}] = \infty$, which contradicts to the hypothesis of finite $C$. 
Thus, if $U = \infty$, then $\{F(m): m \in M\}$ is not singleton. 

For the case $U <\infty$, (\ref{eq:coer1}) and (\ref{eq:coer2}) together lead that $F(m) = C = U \in (0,\infty)$. 
 Owing to the continuity of $t\mapsto \rho(t)$ at $t=0$ with $\rho(0)=0$ (inherent in Condition (C1)), there exists $\delta > 0$ such that $0 \le t < \delta$ implies $\rho(t) < U/2$. Since $M$ is separable, there is a countable dense subset of $M$, $\{p_1, p_2, \ldots \}$. Note that $\cup_{i=1}^{\infty} B_{\delta}(p_i) = M$ and thus there exists at least one point $p_{\ell}$ such that $\mathbb{P}_{X}(B_{\delta}(p_{\ell})) = \mathbb{P}(X \in B_{\delta}(p_{\ell})) > 0$ (since if not, it holds that $1=\mathbb{P}_{X}(M)=\mathbb{P}_{X}(\cup_{i=1}^{\infty}B_{\delta}(p_i))\le \sum_{i=1}^{n}\mathbb{P}_{X}(B_{\delta}(p_i))=0$ due to the countable sub-additivity of probability measure, which is a contradiction). Hence, it holds that

\begin{eqnarray*}
F(p_{\ell}) &=& \mathbb{E}[\rho\{d(X, p_{\ell})\}1_{X \notin B_{\delta}(p_{\ell})}] + \mathbb{E}[\rho\{d(X, p_{\ell})\}1_{X \in B_{\delta}(p_{\ell})}]\\
&\le&U\cdot \mathbb{P}(X \notin B_{\delta}(p_{\ell})) + U/2 \cdot \mathbb{P}(X \in B_{\delta}(p_{\ell})) \\
&=& U \{ \mathbb{P}(X \notin B_{\delta}(p_{\ell})) + \tfrac{1}{2} \mathbb{P}(X \in B_{\delta}(p_{\ell}))\}\\
&<& U \{ \mathbb{P}(X \notin B_{\delta}(p_{\ell})) + \mathbb{P}(X \in B_{\delta}(p_{\ell}))\}\\
&=& U,
\end{eqnarray*}
which contradicts to the hypothesis that $F(m)=U$ for any $m \in M$. Therefore, $\{F(m): m \in M\}$ is not a singleton set.

The remainder of the proof is divided into two cases depending on whether $U$ is infinite or not. 

\underline{(i) $U= \infty$.} Because $\{F(m): m\in M\}$ is not a singleton set (thus, it has at least two elements), there exists $m_1 \in M$ such that $F(m_1) > \inf_{m\in M} F(m) \ge 0$. Because $M=\cup_{r \ge 0}B_{r}(m_1)$ and $\mathbb{P}(X \in M) = 1$, there exists $r_0 > 0$ such that $\mathbb{P}(X \in B_{r_0}(m_1)) > 0$. Now put $K_0 = \overline{B_{r_0}(m_1)}$. 
Recall that $\rho$ is non-decreasing as assumed in (C1) and satisfies $\lim_{t\to \infty}\rho(t)=\infty$. Thus, for any finite $u$, there exists a $t_0>0$ such that, for all $t\ge t_0$, $\rho(t) > u$. Accordingly, there exists $r_1 ~(> r_0)$ such that for any $r \ge r_1$
\[
\rho(r-r_0)> \frac{\inf_{m \in K_0} F(m)}{\mathbb{P}(X \in K_0)}.
\] 
We set $K:= \overline{B_{r_1}(m_1)}$, which contains $K_0$ and is compact by the Hopf-Rinow theorem. 
For each $m \in M \setminus K$,
\begin{equation}\label{lem:coercive:eq1}
F(m) \ge \int_{K_0} \rho\{d(x, m)\}d\mathbb{P}_{X}(x) \ge \rho(r_1 - r_0)\mathbb{P}(X \in K_0) > \inf_{m \in K_0}F(m).
\end{equation}
Here, the second inequality holds by $d(x, m) \ge d(m_1, m) - d(m_1, x) \ge r_1 -r_0$ for any $x \in K_0$. From (\ref{lem:coercive:eq1}), we get
\[
 \inf_{m \in M\setminus K} F(m) \ge \rho(r_1 - r_0)\mathbb{P}(X \in K_0) > \inf_{m \in K_0}F(m) \ge \inf_{m \in K} F(m) = \inf_{m\in M}F(m),
\]
thereby meaning that $E \subset K$. 

\underline{(ii) $U < \infty$.}
Since $\{F(m): m \in M\}$ is not singleton, there is a point $m_0 \in M$ such that $F(m_0) < U$. Note that for any $\epsilon > 0$, there exists $r(\epsilon)>0$ such that $r \ge r(\epsilon)$ implies $\mathbb{P}(X \in B_{r}(m_0)) \ge 1-\epsilon$ and there also exists $t(\epsilon) >0$ such that $t > t(\epsilon)$ implies $\rho(t) \ge U-\epsilon$. For any $m \in M\setminus B_{r(\epsilon)+t(\epsilon)}(m_0)$, 

\begin{equation}\label{ineq:coercive}
F(m)\ge  \int_{B_{r(\epsilon)}(m_0)}\rho\{d(x, m)\}d\mathbb{P}_{X}(x)   
\ge (1-\epsilon)(U-\epsilon),  
\end{equation}
where the last inequality holds because for any $x \in B_{r(\epsilon)}(m_0)$, $d(x,m) \ge d(m,m_0) - d(x, m_0) >r(\epsilon) +t(\epsilon) -r(\epsilon) = t(\epsilon)$; thus, $\rho\{d(x, m)\} \ge U-\epsilon$. Notably, the inequality (\ref{ineq:coercive}) holds for any $\epsilon > 0$. Since the quantity in (\ref{ineq:coercive}), $(1-\epsilon)(U-\epsilon)$, can be arbitrarily close to 
$U$ for infinitesimal $\epsilon$, and since $U > F(m_0)$, we have, 
for sufficiently small $\epsilon$, 
$U > (1-\epsilon)(U-\epsilon) > (U+F(m_0))/2 > F(m_0).$
According to this fact and (\ref{ineq:coercive}), there exists $\epsilon_0 > 0$ such that for any $m \in M\setminus B_{r(\epsilon_0)+t(\epsilon_0)}(m_0)$, 
\[
F(m) \ge \frac{U + F(m_0)}{2} > F(m_0).
\]
Letting $K= \overline{B_{r(\epsilon_0)+t(\epsilon_0)}(m_0)}$ gives 
\[
\inf_{m \in M\setminus K}F(m) \ge \frac{U+F(m_0)}{2} > F(m_0) \ge \inf_{m \in K}F(m) = \inf_{m\in M}F(m).
\]

Since $K$ is closed and bounded, it is compact by the Hopf-Rinow theorem, and the proof is complete.
\end{proof}

\section{Proof of Theorem 1} 
\begin{proof}[Proof of Theorem 1]
For any $m \in M$ and for any sequence of points $(m_{n})_{n\ge 1}$ converging to $m$, an application of Fatou's lemma leads that
\begin{eqnarray*}
\liminf_{n \to \infty} F(m_n) = \liminf_{n\to \infty}\mathbb{E}[\rho\{d(X,m_n)\}] &\ge& \mathbb{E}[\liminf_{n \to \infty}\rho\{d(X, m_n)\}] \\
&=& \mathbb{E}[\rho\{d(X,m)\}] \\
&=& F(m),
\end{eqnarray*}
where the penultimate equality holds by the continuity of $\rho$. Accordingly, the map $m \mapsto F(m)$ is lower semi-continuous. Let $K$ be the compact set from Proposition 1. Because $F$ is lower semi-continuous, it attains its minimum on the compact set $K$. Since the population M-functional set $E$ includes such a minimum, it is not empty.    
\end{proof}

\section{Auxiliary lemmas and a proof of Theorem 2}

To verify Theorem 2, we begin with a lemma which ensures that the population M-functional set associated with $\mathbb{P}_{X}$ lies in a strongly convex ball. 

\begin{lemma}\label{lem:contain:ball}
Suppose that Conditions (C1) and either (C2) or (C3) hold, and that $\mathbb{P}_{X}$ satisfies Assumptions (A1) and (A2). Then, there exist $p_0 \in M$ and a finite $r \in (0,r_0)$ 
such that $E \subset B_r(p_0)$ and the ball $B_{r}(p_0)$ is strongly convex.
\end{lemma}

\begin{proof}[Proof of Lemma \ref{lem:contain:ball}]

  The proof is divided into two cases, depending on the value of $r_0$: Case I for $r_0 = \infty$ and Case II for $r_0 <\infty$.

{\bf Case I ($r_0 = \infty$).} In this case, Assumption (A2) is of limited utility, as it allows for the unbounded support for $\mathbb{P}_{X}$. On the other hand, since the objective function is integrable, there exists a compact set $K\subset M$ such that $E \subset K = \overline{B_{r'}(p_0)}$ for some $r' <\infty$ and $p_0 \in M$. (See the proof of Proposition 1 in Appendix.) Pick a small $\epsilon > 0$, and let $r = r' + \epsilon$. Then, $E \subset K \subset B_r(p_0)$, as asserted. 

{\bf Case II ($r_0 < \infty$).} 
Since $\textsf{supp}(\mathbb{P}_{X})$ is closed, Assumption (A2) leads that there exists $\epsilon>0$ satisfying $\textsf{supp}(\mathbb{P}_{X}) \subset B_{r_0 - \epsilon}(p_0) \subset  B_{r_0}(p_0)$, where $r_0, p_0$ are specified in Assumption (A2). Let $r = r_0 - \epsilon$.

By definition, the regular convexity radius of $M$, denoted by $r_{\mbox{\tiny cx}}(M)$, is the supremum of radii such that geodesic balls of that radius are strongly convex. It is also known that $r_{\mbox{\tiny cx}}(M)$ is lower bounded by $\frac{1}{2} \min\{\frac{\pi}{\sqrt{\Delta}}, r_{\mbox{\tiny inj}}(M)\}$ \citep[][pp. 404]{chavel2006riemannian}. Since $r \in (0, r_0)$, the ball $B_r(p_0)$ is strongly convex.

We now show that $E \in B_r(p_0)$, which closely follows the argument used in the proof for Theorem 2.1 of \cite{afsari2011riemannian}.

Firstly, we show that $E \subset B_{2r}(p_0)$. Suppose to the contrary that 
there exists $m_0 \in E$ such that $d(p_0, m_0) \ge 2r$. By 
the triangle inequality, for any $x \in \textsf{supp}(\mathbb{P}_{X})$, we have $d(p_0, x) < r \le d(x, m_0)$, which implies that  $\rho\{d(p_0, x)\} < \rho\{d(x, m_0)\}$. 
Therefore, $F(p_0) < F(m_0)$, contradicting the assumption that $m_0 \in E$. 

Secondly, we show that $E \subset \overline{B_{r}(p_0)}$. Suppose, for contradiction, that there exists $m_0 \in E$ such that $r < d(m_0, p_0) < 2r$. Let $m'$ be the intersection of the (length-minimizing) geodesic, joining $p_{0}$ and $m_0$, and the boundary of $B_{r}(p_0)$. Let $m''$ be the reflection point of $m_0$ with respect to $m'$ across the boundary of $B_{r}(p_0)$. Then, $m'' \in B_{r}(p_0)$ and $d(m_0, m')=d(m', m'')$. As shown in Section 2.2.1 of \cite{afsari2011riemannian}, for any $x \in \textsf{supp}(\mathbb{P}_{X}) \subseteq B_{r}(p_0)$, $d(m'', x) < d(m_{0}, x)$. 
(This result relies on an application of the Toponogov comparison theorem; see \cite[][pp. 420]{chavel2006riemannian} 
or \cite[][Theorem 2.2]{afsari2011riemannian}.) Therefore, $\rho\{d(m'', x)\} < \rho\{d(m_{0}, x)\}$ for any $x \in \textsf{supp}(\mathbb{P}_{X})$, and thus $F(m'') < F(m_{0})$, contradicting the assumption that $m_0\in E$. 

Thirdly, we verify that $E \subset B_{r}(p_0)$. Suppose, again for contradiction, that there exists $m_0 \in E$ such that $m_0$ lies on the boundary of $B_{r}(p_0)$. For each $x \in B_{r}(p_0)$, either by Condition (C2) or by Lemma \ref{lem:ineq} under (C3), $\rho'\{d(x, m_0)\} >0$ and 
the tangent vector $\mbox{Log}_{m_{0}}(x)/\|\mbox{Log}_{m_{0}}(x)\|_{m_0}$ 
points inward at the boundary of $B_{r}(p_0)$. 
Hence, $-\textsf{grad}F(m_0)= \int \rho^{'}\{d(x, m_0)\} \cdot \mbox{Log}_{m_0}(x)/\|\mbox{Log}_{m_0}(x)\|_{m_0}d\mathbb{P}_{X}(x) \neq \mathbf{0}$ 
is an inward-pointing and non-zero tangent vector at $m_0$. Therefore, $m_0 \notin E$, contradicting the hypothesis of the existence of an M-estimator at the boundary of $B_r(p_0)$. 
\end{proof}

Next technical lemma is used as well. 

\begin{lemma}\label{lem:ineq}
Under Condition (C3), $\rho'$ is strictly increasing. Moreover, for any $t \in [0, \infty)$, $\rho'(t) \le t\rho''(t)$.
\end{lemma}

\begin{proof}[Proof of Lemma \ref{lem:ineq}]
$\rho''>0$ implies that $\rho'$ is strictly increasing. The fundamental theorem of calculus implies that for any $t \in [0, \infty)$,
\begin{eqnarray*}
\rho'(t)= \rho'(0) + \int_{0}^{t} \rho''(u)du \le \int_{0}^t \rho''(t)du=t\rho''(t),
\end{eqnarray*}
where above inequality holds since $\rho''$ is non-decreasing.
\end{proof}

We are now ready to prove Theorem~2. 

\begin{proof}[Proof of Theorem~2]

Suppose, for contradiction, that the population M-functional associated with $\mathbb{P}_{X}$ is not unique. Then, there exist at least two distinct population M-functionals, denoted by $m_{0}$ and $m'_{0}$, associated with $\mathbb{P}_X$. Note that both   $m_{0}$ and $m'_{0}$ are 
minimizers of the objective function $F(\cdot)$, and by Lemma \ref{lem:contain:ball}, they lie in the strongly convex $B_r(p_0)$ (for $r,p_0$ specified in Lemma \ref{lem:contain:ball}). Thus, there exists a unique geodesic $\gamma$ with unit speed connecting $m_{0} ~(= \gamma(0))$ and $m'_{0} ~(=\gamma(t_0))$ so that for some $-\infty < t_1 < t_2 < \infty$ satisfying $ t_1 < 0 < t_0 < t_2$ and $\gamma((t_1, t_2)) = \gamma \cap B_r(p_0)$.
%
%
(We abuse the notation for the geodesic $\gamma$, which either refers to a function from an interval to $M$ or the image of the function $\gamma$ in $M$.)

We now begin by verifying case (a). 

(a) First of all, Assumption (C2) leads that, for any $x \in B_{r}(p_0)$ and 
for any $t \in (t_1,t_2)$,  $\tfrac{\partial^2}{\partial t^2} d(x,\gamma(t))$ is well-defined for $x \neq \gamma(t)$. 
We further claim that for 
$ x \in \textsf{supp}(\mathbb{P}_X) \setminus \gamma$
and for any $t \in (t_1,t_2)$, the following holds: 
\begin{equation}\label{eq:x_notin_gamma}
   \tfrac{\partial^2}{\partial t^2} d(x,\gamma(t)) > 0.
\end{equation}
This can be seen from the Hessian comparison theorem, which states that 
for any $x \in \textsf{supp}(\mathbb{P}_X)$ and for any $t \in (t_1,t_2)$,
\begin{equation}
    \label{eq:HessianComparison} \tfrac{\partial^2}{\partial t^2} d(x,\gamma(t)) \ge \frac{\textsf{sn}_{\Delta}'(d(x,\gamma(t))}{\textsf{sn}_{\Delta}(d(x,\gamma(t))} \sin^2(\alpha),
\end{equation} 
where $\alpha$ is the angle formed at $\gamma(t)$ by the geodesic $\gamma$ and the unique length-minimizing geodesic from $x$ to $\gamma(t)$, and 
$\textsf{sn}_{\Delta}(s) = \tfrac{1}{\sqrt{\Delta} }\sin (s\sqrt{\Delta} )$ for $\Delta >0$, 
$s$ for $\Delta =0$, and $\tfrac{1}{\sqrt{|\Delta|} }\sinh (s\sqrt{|\Delta|})$ for $\Delta <0$ \citep{afsari2011riemannian}. 
Employing the triangle inequality and $r < r_0 \le \frac{\pi}{4\sqrt{\Delta}}$ as assumed in (A2), we obtain for any $x \in \textsf{supp}(\mathbb{P}_X) \subset B_{r}(p_0)$, $d(x,\gamma(t)) < \tfrac{\pi}{2\sqrt{\Delta}}$, which in turn leads that 
$$
\frac{\textsf{sn}_{\Delta}'(d(x,\gamma(t))}{\textsf{sn}_{\Delta}(d(x,\gamma(t))} > 0, 
$$
unless $d(x,\gamma(t)) = 0$. 
The above result and the fact that for $x \in B_{r}(p_0) \setminus \gamma$, the angle is non-zero (i.e., $\sin(\alpha) \neq 0$), together with (\ref{eq:HessianComparison}), lead to (\ref{eq:x_notin_gamma}). On the other hand, if $x \in \gamma \cap \textsf{supp}(\mathbb{P}_X)$, but $x \neq \gamma(t)$, then 
$\sin(\alpha) = 0$, leading that 
$\tfrac{\partial^2}{\partial t^2} d(x,\gamma(t)) = 0$.

The following holds for all $t \in (t_1, t_2)$ satisfying that there is no point mass at $\gamma(t)$ or at any point $y \in B_{r_0}(p_0)$ such that $d(y,\gamma(t)) \in S$:
%
\begin{equation}
\tfrac{\partial^2 }{\partial t^2} F(\gamma(t)) = \int \rho''\{d(x, \gamma(t))\} \{\tfrac{\partial }{\partial t}d(x,\gamma(t))\}^2  + \rho'\{d(x, \gamma(t))\} \tfrac{\partial^2}{\partial t^2}d(x, \gamma(t))d\mathbb{P}_{X}(x). 
\label{eq:2ndderivative_2}
\end{equation}

Since both integrands are nonnegative, we have 
\begin{align}\label{eq:2nd_deriv_F}
\tfrac{\partial^2 }{\partial t^2} F(\gamma(t))     \ge& \int_{\gamma} \rho''\{d(x,\gamma(t))\} \{\frac{\partial}{\partial t}d(x,\gamma(t))\}^2 d\mathbb{P}_{X}(x) \nonumber \\
    &+ \int_{B_r(p_0) \setminus \gamma} \rho'\{d(x, \gamma(t))\} \frac{\partial^2}{\partial t^2}d(x, \gamma(t)) d\mathbb{P}_{X}(x)  > 0.  
\end{align}
Note that (\ref{eq:2nd_deriv_F}) is positive wherever it is defined because $\mathbb{P}_{X}$ is not supported entirely on $\gamma$ by assumption and for $x\in \textsf{supp}(\mathbb{P}_X)\setminus \gamma$, the second integrand of (\ref{eq:2nd_deriv_F}) is positive with non-zero mass. 

Since there are at most countable point masses along $\gamma$ and even in $B_{r_0}(p_0)$ (together with the fact that $S$ is countable), (\ref{eq:2nd_deriv_F}) is well-defined and is positive for $t \in (t_1, t_2)$, possibly excluding a countable set. Hence,
$t \mapsto \frac{\partial}{\partial t}F(\gamma(t))$ is strictly increasing by the Goldowsky-Tonelli theorem (see, e.g., Theorem 1 of  \cite{casey2005positive}). It implies that $t \mapsto F(\gamma(t))$ is strictly convex, which is a contradiction to the hypothesis of two distinct population M-functionals associated with $\mathbb{P}_{X}$.

(b) We next show that if the condition (b) holds, then the population M-functional associated with $\mathbb{P}_{X}$ is unique. 
Suppose again that, for contradiction, $m_0$ and $m_0'$ are both M-functionals and $\gamma$ is the unique geodesic connecting $m_0$ and $m_0'$. 

We first note that Assumption (C3) also leads that $\tfrac{\partial^2}{\partial t^2} d(x,\gamma(t))$ is well-defined for $x \in B_r(p_0)$ and $t \in (t_1,t_2)$ as long as $x \neq \gamma(t)$. The Hessian comparison  (\ref{eq:HessianComparison}) is still valid for this case as well. Moreover, it can be seen that for $\Delta \le 0$, 
$\frac{\textsf{sn}_{\Delta}'(s)}{\textsf{sn}_{\Delta}(s)} > 0$ for any $s>0$, but for $\Delta > 0$, 
$\frac{\textsf{sn}_{\Delta}'(s)}{\textsf{sn}_{\Delta}(s)}$ is nonnegative only for $s \le \pi/2$ and is negative  for $s > \pi/2$. Accordingly, we divide the proof into two cases. 


{\bf Case I ($\Delta \le 0$).}  
 From 
 (\ref{eq:HessianComparison}), we obtain that for any $t \in (t_1,t_2)$, 
 $\tfrac{\partial^2}{\partial t^2} d(x,\gamma(t))$ is positive for $x \in \mathsf{supp}(\mathbb{P}_X) \setminus \gamma$, and is zero for $x \in \gamma \cap \mathsf{supp}(\mathbb{P}_X)$. Therefore, the same argument used in part (a) completes the proof.

{\bf Case II ($\Delta > 0$).} 
Since $\Delta > 0$, $r_0$ specified in Assumption (A2) is finite. Let $B_r(p_0)$ with $r ~(< r_0)$ be as in the statement of Lemma 2, so that both $\textsf{supp}(\mathbb{P}_X)$ and $E$ are contained in the strongly convex ball $B_r(p_0)$. In this case, the inequality $\frac{\partial^2}{\partial t^2}d(x, \gamma(t)) > 0$ for $x \in B_{r}(p_0)\setminus \gamma$ is not guaranteed: the Hessian comparison (\ref{eq:HessianComparison}) combined with the assumption $r_0\le \frac{\pi}{2\sqrt{\Delta}}$ in (A2) does not ensure the strict positivity. Therefore, the proof requires a more refined argument.

If $\mathbb{P}_{X}$ is supported entirely on the geodesic $\gamma$, then $\frac{\partial^2}{\partial t^2}d(x, \gamma(t)) = 0$ for any $x \in \textsf{supp}(\mathbb{P}_X)$ and $t \in (t_1,t_2)$. This fact, together with Assumption (C3) leads that $\frac{\partial^2 }{\partial t^2}F(\gamma(t))$ is strictly positive, as needed. 

Hence, in the rest of the proof we assume that  $\mathbb{P}_{X}$ is not entirely supported on any single geodesic contained in $B_{r}(p_0)$.

Since $E ~(\subset B_r(p_0))$ is non-empty by Theorem 1, there is at least one critical point of $F$ in $B_r(p_0)$, which is in fact a minimizer. While the existence of other critical points that are not minimizers can not be ruled out a priori, we claim that any critical point of $F$, restricted to $B_r(p_0)$, is in fact a local minimizer. If this claim holds, then the Poincar\'{e}-Hopf index theorem \citep[][Chapter 3.5]{guillemin2010differential} applied to the compact submanifold $\overline{B_{r}(p_0)}$ of $M$ leads that $F$ has exactly one critical point in $\overline{B_{r}(p_0)}$, which must be the global minimizer. Thus, $E$ is a singleton, completing the proof.

It is left to verify the claim that 
any critical point of $F$, restricted to $B_r(p_0)$, is in fact a local minimizer. Let $m_0\in B_{r}(p_0)$ be such a critical point, and let $\eta$ be any unit-speed geodesic with $\eta(0) = m_0$.

%
%
By applying the Hessian comparison theorem (\ref{eq:HessianComparison}), we obtain for $x \in B_{r}(p_0)\setminus \{m_0\}$,
$\tfrac{\partial^2}{\partial t^2}\mid_{t=0} d(x,\eta(t)) \ge \sqrt{\Delta}\cot\{\sqrt{\Delta}d(x, m_0) \} \cdot \sin^2(\alpha_{x}),
$
where $\alpha_{x}$ denotes the angle formed at $m_0$ between the geodesic segment from $m_0$ to $x$ and $\eta$.

Since $\eta$ has a unit speed, furthermore, $\frac{\partial}{\partial t}\mid_{t=0}d(x, \eta(t)) = -\cos(\alpha_{x})$ for $x \neq m_0$. 
For $x = m_0$, since $\eta$ is a unit speed geodesic, $d(m_0, \eta(t)) = |t|$, and $\rho\{d(m_0, \eta(t))\} = \rho(|t|)$. By Condition (C3), the function $\rho_0: \mathbb{R}\to\mathbb{R}$, $\rho_0(t):= \rho(|t|)$, is twice continuously differentiable at 0, and $\tfrac{\partial^2}{\partial t^2}\mid_{t=0}\rho\{d(m_0,\eta(t))\} = \rho_0''(0) = \rho''(0) \ge 0$.
These facts, together with (\ref{eq:2ndderivative_2}), 
lead that 
\begin{equation}\label{eq:2ndderiv_bound_decomp1}
        \frac{\partial^2}{\partial t^2}\mid_{t=0}F(\eta(t)) \ge I_1(m_0,\eta) + I_2(m_0,\eta),
\end{equation}
where 
\begin{align}
I_1(m_0,\eta)  = & \int_{B_{r}(p_0)\setminus \{m_0\}} \big[\rho''\{d(x, m_0)\}\cos^2(\alpha_{x}) \nonumber \\
&+ \rho'\{d(x, m_0)\} \cdot \sqrt{\Delta}\cot\{\sqrt{\Delta}d(x, m_0) \} \cdot \sin^2(\alpha_{x}) \big]d\mathbb{P}_{X}(x), \label{eq:2ndderiv_bound_decomp2}
\end{align}
and 
$I_2(m_0,\eta) = \int_{\{m_0\}}\frac{\partial^2}{\partial t^2}\mid_{t=0}\rho\{d(x,\eta(t))\}d\mathbb{P}_{X}(x) = \rho''(0)\mathbb{P}_{X}(\{m_0\}) \ge 0$.

Suppose that $m_0=p_0$. Then, the integrand of (\ref{eq:2ndderiv_bound_decomp2}) is positive for any $x \in B_{r}(m_0)\setminus \{m_0\}$. This can be seen from the fact that $d(x, m_0) < r \le 
{\pi}/({2\sqrt{\Delta}})$, thus $\cot\{\sqrt{\Delta}d(x, m_0) \} > 0$, and by Condition (C3), in particular, that $\rho''(\cdot), \rho'(\cdot) > 0$ on $(0, \infty)$ with $\rho''(0) \ge 0$. Therefore, (\ref{eq:2ndderiv_bound_decomp1}) is positive unless $\textsf{supp}(\mathbb{P}_{X})=\{m_0\}$, which is precluded by the assumption that $\mathbb{P}_{X}$ is not supported entirely on any geodesic. Since $\eta$ was arbitrary, the Hessian of $F$ at $m_0$ is positive definite; hence, $m_0$ is a local minimizer of $F$, as desired.

Hereafter, suppose $m_0 \neq p_0$.  
Plugging-in the inequality $\sqrt{\Delta}d(x, m_0) \cdot \cot\{\sqrt{\Delta}d(x, m_0) \} \le 1$ into the integrand of (\ref{eq:2ndderiv_bound_decomp2}) and utilizing $\rho'(t) \le t \rho''(t)$ from Lemma \ref{lem:ineq}, we obtain
\begin{align}\label{ineq:2nd_deriv2}
I_1(m_0,\eta) \ge &\int_B \big[\rho''\{d(x, m_0)\}d(x, m_0) \cdot \sqrt{\Delta} \cdot \cot\{\sqrt{\Delta}d(x, m_0)\} \cos^2(\alpha_{x}) \nonumber \\
&+ \rho'\{d(x, m_0\}\sqrt{\Delta}\cdot \cot\{\sqrt{\Delta}d(x, m_0)\}\sin^2(\alpha_{x})\big]d\mathbb{P}_{X}(x) \nonumber \\
\ge&  \int_B \big[\rho'\{d(x,m_0)\}\sqrt{\Delta}\cdot \cot\{\sqrt{\Delta}d(x, m_0)\}\cos^2(\alpha_{x}) \nonumber \\
&+  \rho'\{d(x, m_0\}\sqrt{\Delta}\cdot \cot\{\sqrt{\Delta}d(x, m_0)\}\sin^2(\alpha_{x}) \big]d\mathbb{P}_{X}(x) \nonumber \\
=& \int_B \rho'\{d(x, m_0)\}\sqrt{\Delta}\cdot \cot\{\sqrt{\Delta}d(x, m_0)\} d\mathbb{P}_{X}(x), 
\end{align}
where $B := B_r(p_0) \setminus \{m_0\}$.

For $x \in B$, let $\beta_{x}$ denote the angle between the geodesic segment from $m_0$ to $x$ and the geodesic segment from $m_0$ to $p_0$. Utilizing the Toponogov comparison theorem in conjunction with the spherical geometry, an argument similar to Section 2.2.3 of \cite{afsari2011riemannian} leads that 
for any $x \in B$,
\begin{align}\label{ineq:central:positive}
\cot\{\sqrt{\Delta}d(x,m_0)\} > C(m_0, \Delta)\cos(\beta_{x}),
\end{align}
where $C(m_0, \Delta)$ denotes some constant depending solely on $m_0$ and $\Delta$. (This same argument was used in verifying uniqueness of the $L_p$ center of mass, $2 \le p < \infty$, in Theorem 2.1 of \cite{afsari2011riemannian}.)
Combining (\ref{eq:2ndderiv_bound_decomp1}) and (\ref{ineq:2nd_deriv2}) with (\ref{ineq:central:positive}), we have 
\begin{align}
    \frac{\partial^2}{\partial t^2}\mid_{t=0}F(\eta(t)) & > \int_B \rho'\{d(x,m_0)\} \sqrt{\Delta} C(m_0, \Delta)\cos(\beta_{x})d\mathbb{P}_{X}(x) \nonumber \\
    & = \sqrt{\Delta} C(m_0, \Delta) \int \rho'\{d(x,m_0)\} \cos(\beta_{x})d\mathbb{P}_{X}(x). \label{eq:2ndderiv_final}
\end{align} 
The range of integration for the last expression is $B_r(p_0)$, since $\rho'\{d(x,m_0)\} = 0$ at $x = m_0$. 

Since $m_0$ is a critical point, it holds that $$\textsf{grad}F(m_0)= -\int\rho'\{d(x, m_0)\} \cdot \mbox{Log}_{m_0}(x)/\|\mbox{Log}_{m_0}(x)\|_{m_0}d\mathbb{P}_{X}(x) = \textbf{0} \in T_{m_0}M ~(\cong \mathbb{R}^{k}),$$ where $\|\cdot\|_{m_0}$ stands for the Riemannian norm induced by the Riemannian metric on $T_{m_0}M$, and its projection to the geodesic segment connecting $m_0$ and $p_0$ is zero; in other words,
\begin{equation}\label{eq:zero}
\int \rho'\{d(x,m_0)\}\cos(\beta_{x}) d\mathbb{P}_{X}(x)=0.
\end{equation}
Combining (\ref{eq:2ndderiv_final}) with (\ref{eq:zero}), we have shown that  (\ref{ineq:2nd_deriv2}) is positive.

 Since $\eta$ was arbitrarily chosen, the Hessian of $F$ at $m_0$ is positive definite. In conclusion, any critical point of the restricted map of $F$ on $B_{r}(p_0)$ must be a local minimizer.
\end{proof}

\end{appendix}

\bibliographystyle{imsart-number}
\bibliography{gmr}

\end{document}